\newtheorem{thm}{Theorem}
\begin{document}

\title{Application of K\"ahler manifold to signal processing and Bayesian inference}

\classification{}

\keywords{information geometry, K\"ahler manifold, signal processing, Bayesian inference, Komaki prior, ARFIMA model}

\author{Jaehyung Choi}{
  address={Department of Applied Mathematics and Statistics, SUNY, Stony Brook, NY 11794},
email={jj.jaehyung.choi@gmail.com}
}
\author{Andrew P. Mullhaupt}{
  address={Department of Applied Mathematics and Statistics, SUNY, Stony Brook, NY 11794},
  email={doc@zen-pharaohs.com}
}

\begin{abstract}
We review the information geometry of linear systems and its application to Bayesian inference, and the simplification available in the K\"ahler manifold case. We find conditions for the information geometry of linear systems to be K\"ahler, and the relation of the K\"ahler potential to information geometric quantities such as $\alpha $-divergence, information distance and the dual $\alpha $-connection structure. The K\"ahler structure simplifies the calculation of the metric tensor, connection, Ricci tensor and scalar curvature, and the $\alpha $-generalization of the geometric objects. The Laplace--Beltrami operator is also simplified in the K\"ahler geometry. One of the goals in information geometry is the construction of Bayesian priors outperforming the Jeffreys prior, which we use to demonstrate the utility of the K\"ahler structure.

\end{abstract}

\maketitle


\section{Introduction}

K\"ahler manifolds are important in differential geometry, with applications in several different fields such as supersymmetric gauge theory and superstring theory in theoretical physics, and in our interest, information geometry. After Barndorff-Nielsen and Jupp found the connection between statistics and symplectic geometry \cite{Barndorff-Nielsen:1997}, Barbaresco introduced K\"ahler manifolds into information geometry \cite{Barbaresco:2006} and suggested generalized complex manifolds for information geometry \cite{Barbaresco:2012,Barbaresco:2014}. Symplectic and K\"ahler structures of divergence functions are also revealed \cite{Zhang:2013}. Recently, Choi and Mullhaupt \cite{Choi:2014c} proved the mathematical correspondence between K\"ahler manifolds and the information geometry of linear systems. Moreover, the implication of the K\"ahler manifold to Bayesian inference for linear systems is also reported \cite{Choi:2014d}.

K\"ahlerian information geometry has several advantages in describing the information geometry of linear systems \cite{Choi:2014c}. First of all, geometric tensor calculation is simplified. Additionally, the $\alpha $%
-generalization of the tensors is more straightforward because the Riemann
tensor is $\alpha $-linear on the complex manifold. Moreover, searching for
the superharmonic priors suggested by Komaki \cite{Komaki:2006} is more
efficient because the Laplace--Beltrami operator in the K\"ahler geometry
is much simpler. This simplicity leads to a systematic and generic algorithm
for the geometric shrinkage priors \cite{Choi:2014d}.

In this paper, we give a review on the recent developments in the
applications of the K\"ahler manifold to information geometry, in
particular, the implications to signal processing and Bayesian inference.
First, we provide the brief fundamentals of the K\"ahler manifold, and the K\"ahlerian description of linear systems is introduced. After then, we present an application to Bayesian inference.

\section{Review on K\"ahler manifold}

We review the fundamentals of the K\"ahler manifold in this section. Let
us start with construction of a complex manifold. To extend a real manifold
to a complex manifold, the concept of complexification is necessary. Any
functions and vectors can be complexified. The complexified coordinate
system $\xi =(\xi ^{1},\cdots ,\xi ^{n})\in \mathbb{C}^{n}$ of a complex
manifold $M$ is given by 
\begin{equation}
\xi =\theta +i\zeta   \notag
\end{equation}%
where $\theta $ and $\zeta $ are the real coordinate systems of a real
manifold $N$ of dimension $n$. The complex manifold $M$ is the
complexification of the manifold $N$, denoted by $N^{\mathbb{C}}$, and it can be considered as a product manifold $N\times N$. From now on, we work
on the complex manifold $M$ of $\text{dim}_{\mathbb{C}}M=n$.

On the tangent plane at point $p$, also denoted by $T_{p}M$, the basis
vectors are given by the real coordinates as 
\begin{equation}
\{\frac{\partial }{\partial \theta ^{1}},\cdots ,\frac{\partial }{\partial
\theta ^{n}};\frac{\partial }{\partial \zeta ^{1}},\cdots ,\frac{\partial }{%
\partial \zeta ^{n}}\}  \notag
\end{equation}%
and the cotangent plane $T_{p}^{\ast }M$, which is dual to the tangent
plane, is spanned by 
\begin{equation}
\{d\theta ^{1},\cdots ,d\theta ^{n};d\zeta ^{1},\cdots ,d\zeta ^{n}\}  \notag
\end{equation}%
where $d\theta ^{i}$ and $d\zeta ^{i}$ are the one-forms of the manifold.
Since the vectors on the tangent space and the one-forms on the cotangent
space are dual to each other, the basis vectors and the one-forms should
satisfy the following identities: 
\begin{align}
\langle d\theta ^{i},\frac{\partial }{\partial \theta ^{j}}\rangle & =\delta
_{j}^{i},\langle d\theta ^{i},\frac{\partial }{\partial \zeta ^{j}}\rangle =0
\notag \\
\langle d\zeta ^{i},\frac{\partial }{\partial \theta ^{j}}\rangle &
=0,\langle d\zeta ^{i},\frac{\partial }{\partial \zeta ^{j}}\rangle =\delta
_{j}^{i}  \notag
\end{align}%
where $\langle \cdot ,\cdot \rangle $ is the inner product and $\delta
_{j}^{i}$ is the Kronecker delta.

It is also possible to describe the manifold with the complexified
coordinate system. First of all, let us introduce the following vectors:  
\begin{equation}
\frac{\partial}{\partial \xi^i}=\frac{1}{2}\Big(\frac{\partial}{\partial
\theta^i}-i \frac{\partial}{\partial \zeta^i}\Big), \frac{\partial}{\partial 
\bar{\xi}^i}=\frac{1}{2}\Big(\frac{\partial}{\partial \theta^i}+i \frac{%
\partial}{\partial \zeta^i}\Big).  \notag
\end{equation}
The tangent space $T_pM$ is spanned by the basis vectors defined above:  
\begin{equation}
\{\frac{\partial}{\partial \xi^1},\cdots,\frac{\partial}{\partial \xi^n},%
\frac{\partial}{\partial \bar{\xi}^1},\cdots,\frac{\partial}{\partial \bar{%
\xi}^n} \}  \notag
\end{equation}
and its dual cotangent space $T^*_p M$ is spanned by  
\begin{equation}
\{d\xi^1,\cdots,d\xi^n,d\bar{\xi}^1,\cdots,d\bar{\xi}^n\}  \notag
\end{equation}
such that the vectors and the one-forms in the complexified coordinate
system satisfy the similar identities in the case of the real basis vectors and one-forms:  
\begin{align}
\langle d\xi^i,\frac{\partial}{\partial \xi^j}\rangle=\delta^i_j,\langle
d\xi^i,\frac{\partial}{\partial \bar{\xi}^j}\rangle=0  \notag \\
\langle d\bar{\xi}^i,\frac{\partial}{\partial \xi^j}\rangle=0,\langle d\bar{%
\xi}^i,\frac{\partial}{\partial \bar{\xi}^j}\rangle=\delta^i_j.  \notag
\end{align}

The manifold has the almost complex structure that is the linear mapping $%
J_{p}:T_{p}M\rightarrow T_{p}M$ with 
\begin{equation}
J_{p}\frac{\partial }{\partial \xi ^{i}}=i\frac{\partial }{\partial \xi ^{i}}%
,\text{ }J_{p}\frac{\partial }{\partial \bar{\xi}^{i}}=-i\frac{\partial }{%
\partial \bar{\xi}^{i}}  \notag
\end{equation}%
and its matrix representation is the following: 
\begin{equation*}
J_{p}=\left( 
\begin{array}{cc}
i\text{ }\mathbb{I}_{n} & 0 \\ 
0 & -i\text{ }\mathbb{I}_{n}%
\end{array}%
\right) 
\end{equation*}%
where $\mathbb{I}_{n}$ the identity matrix of dimension $n$.

A Hermitian manifold is defined as a complex manifold equipped with the
metric tensor $g_p$ of the following property: 
\begin{equation}
g_{p}(J_{p}X,J_{p}Y)=g_{p}(X,Y)  \notag
\end{equation}%
where $X,Y\in T_{p}M$. First of all, the definition of the Hermitian
manifold can be represented in terms of the metric tensor components as follows: 
\begin{equation}
g_{ij}=g_{\bar{\imath}\bar{j}}=0  \label{metric_con_hermitian}
\end{equation}%
where the metric elements with the mixed indices may not vanish. In addition to that, it is always possible to construct a Hermitian manifold from any complex manifold.

One more concept for defining the K\"ahler manifold is the K\"ahler form. The K\"ahler form is defined as 
\begin{equation}
\Omega _{p}=g_{p}(J_{p}X,Y)  \notag
\end{equation}%
where $X,Y\in T_{p}M$. It is antisymmetric under the exchange of $X$ and $Y$%
: $\Omega _{p}(X,Y)=-\Omega _{p}(Y,X)$. It is expressed in terms of the metric tensor components: 
\begin{equation}
\Omega =ig_{i\bar{j}}d\xi^{i}\wedge d\bar{\xi}^{j}  \notag
\end{equation}%
where $\wedge $ is the wedge product.

Now, we are ready for defining the K\"ahler manifold. The K\"ahler manifold is defined
as the Hermitian manifold with the closed K\"ahler form. The closed K\"ahler
two-form, $d\Omega =0$, is written in the metric tensor components 
\begin{equation}
\partial _{i}g_{j\bar{k}}=\partial _{j}g_{i\bar{k}},\partial _{\bar{\imath}%
}g_{k\bar{j}}=\partial _{\bar{j}}g_{k\bar{\imath}}.
\label{metric_con_closed_kahler}
\end{equation}%
In the metric tensor expression, the geometry is K\"ahler if and only if
the metric tensor satisfies eq. (\ref{metric_con_hermitian}) and eq. (\ref%
{metric_con_closed_kahler}).

One of the most well-known properties in the K\"ahler geometry is that the metric components on the K\"ahler manifold is given by the Hessian structure:
\begin{equation}
g_{i\bar{j}}=\partial _{i}\partial _{\bar{j}}\mathcal{K}
\label{metric_kahler_rel}
\end{equation}%
where $\mathcal{K}$ is the K\"ahler potential. All the information on the
metric tensor is encoded in the K\"ahler potential. The nontrivial
elements of the Levi-Civita connection also can be expressed with the K\"a%
hler potential: 
\begin{equation}
\Gamma _{ij,\bar{k}}=\partial _{i}\partial _{j}\partial _{\bar{k}}\mathcal{K}=(\Gamma _{\bar{i}\bar{j},k})^*
\label{connection_kahler_rel}
\end{equation}%
and the other elements of the Levi-Civita connection vanish. The connection with this property is called the Hermitian connection.

Another notable fact in the K\"ahler geometry is that the Ricci tensor is
calculated from  
\begin{equation}  \label{ricci_kahler_rel}
R_{i\bar{j}}=-\partial_i\partial_{\bar{j}}\log{\mathcal{G}}
\end{equation}
where $\mathcal{G}$ is the determinant of the metric tensor. The lengthy
calculation for the Riemann curvature tensor can be skipped in the procedure of
obtaining the Ricci tensor.

Additionally, the submanifolds of the K\"ahler manifolds are also K\"ahler. If a given manifold is the K\"ahler manifold, every submanifolds are automatically K\"ahler.

Finally, it is noteworthy that the Laplace--Beltrami operator is represented
with 
\begin{equation}
\Delta =2g^{i\bar{j}}\partial _{i}\partial _{\bar{j}}  \notag
\end{equation}%
and it is much simpler than the Laplace--Beltrami operator of a non-K\"a%
hler manifold.

\section{K\"ahler geometry of signal processing}

\label{sec_maxent_kahler_signal} In this section, we cover the K\"aherian
information geometry for signal processing proposed by Choi and Mullhaupt 
\cite{Choi:2014c}. A signal filter transforms an input signal $x$ to an
output $y$ under the following linear relation: 
\begin{equation}
y(w)=h(w;\boldsymbol{\xi })x(w;\boldsymbol{\xi })  \notag
\end{equation}%
where $h(w;\boldsymbol{\xi })$ is a transfer function in frequency domain $w$%
. The $z$-transformed transfer function of a causal filter is expressed by 
\begin{equation}
h(z;\boldsymbol{\xi })=\sum_{r=0}^{\infty }h_{r}(\xi )z^{-r}  \notag
\end{equation}%
where $h_{r}$ is the $r$-th impulse response function of the linear system.
We assume that $h(z;\boldsymbol{\xi })$ is holomorphic both in $\boldsymbol{%
\xi }$ and $z$.

It is well-known by Amari and Nagaoka \cite{Amari:2000} that the metric
tensor is determined for stationary processes by the spectral density
function $S(z;\boldsymbol{\xi })=|h(z;\boldsymbol{\xi })|^{2}$. It is also
possible to write down the metric tensor in terms of the transfer function
on the complexified manifold. The metric components can be represented with $%
\eta _{r}$, the coefficient of $z^{-r}$ in the logarithmic transfer function (log-transfer function), 
\begin{align}
\label{metric_tensor_gauge1}
g_{ij}& =\partial _{i}\eta _{0}\partial _{j}\eta _{0}\\
\label{metric_tensor_gauge2} 
g_{i\bar{j}}& =\sum_{r=0}^{\infty }\partial _{i}\eta _{r}\partial _{\bar{j}}%
\bar{\eta}_{r}
\end{align}%
where $g_{\bar{\imath}\bar{j}}$ and $g_{\bar{\imath}j}$ are the complex
conjugates of $g_{ij}$ and $g_{i\bar{j}}$, respectively. It is
straightforward that $\eta _{0}=\log {h_{0}}$.

	Choi and Mullhaupt \cite{Choi:2014c} proved that the information geometry of stationary and minimum phase linear systems is K\"ahler. They also provided the conditions on the transfer
function of a linear system where the information geometry is the K\"ahler
manifold in which the Hermitian conditions, eq. (\ref{metric_con_hermitian}), are explicitly shown at  the induced metric level. In this paper, we confine ourselves to the K\"ahler manifold with explicit Hermitian metric properties, eq. (\ref{metric_con_hermitian}). In the case of a causal filter, the condition for K\"ahler manifold is as follows. 

\begin{thm}
\label{thm_tf_kahler_condition}  Given a holomorphic transfer function, the
information geometry of a signal filter is the K\"ahler manifold if and only
if $h_0$ is a constant in $\boldsymbol{\xi}$. 
\end{thm}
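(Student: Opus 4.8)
The plan is to work directly with the metric-tensor characterization of the Kähler condition given in the review: for the information geometry to be Kähler we need both the Hermitian condition \eqref{metric_con_hermitian}, i.e. $g_{ij}=0$, and the closedness condition \eqref{metric_con_closed_kahler}. Since the paper restricts attention to the case where the Hermitian condition is shown explicitly at the induced-metric level, the real content reduces to analyzing $g_{ij}$, whose formula is supplied in \eqref{metric_tensor_gauge1} as $g_{ij}=\partial_i\eta_0\,\partial_j\eta_0$, together with the identity $\eta_0=\log h_0$.

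The forward direction is immediate: if $h_0$ is constant in $\boldsymbol{\xi}$, then $\eta_0=\log h_0$ is constant, so $\partial_i\eta_0=0$ for all $i$, hence $g_{ij}=\partial_i\eta_0\,\partial_j\eta_0=0$ identically, which is exactly \eqref{metric_con_hermitian}; combined with the assumed mixed-index structure this yields a Hermitian metric, and one then invokes the theorem of Choi and Mullhaupt \cite{Choi:2014c} that the log-transfer-function parametrization makes the closedness condition \eqref{metric_con_closed_kahler} hold automatically, so the geometry is Kähler. The converse is the direction requiring an argument: assuming the geometry is Kähler, the Hermitian condition forces $g_{ij}=0$, i.e. $\partial_i\eta_0\,\partial_j\eta_0=0$ for all $i,j$. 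Taking $i=j$ gives $(\partial_i\eta_0)^2=0$, hence $\partial_i\eta_0=0$ for every $i$, so $\eta_0$ is constant in $\boldsymbol{\xi}$; exponentiating, $h_0=e^{\eta_0}$ is constant in $\boldsymbol{\xi}$ as well.

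The step I expect to be the main obstacle — or at least the only non-formulaic point — is justifying that the vanishing of $g_{ij}$ is genuinely \emph{necessary} for the Kähler property in this setting, rather than merely sufficient. One must be careful that $g_{ij}=\partial_i\eta_0\,\partial_j\eta_0$ is a rank-one (possibly complex) bilinear form in the holomorphic differentials, so one cannot appeal to positive-definiteness to conclude $\partial_i\eta_0=0$; instead one uses the diagonal entries $g_{ii}=(\partial_i\eta_0)^2$, and since $\partial_i\eta_0\in\mathbb{C}$, $(\partial_i\eta_0)^2=0$ still forces $\partial_i\eta_0=0$. It is also worth remarking why $\eta_0=\log h_0$ and no higher $\eta_r$ enters: only the $\eta_0$ term contributes to the holomorphic-holomorphic block $g_{ij}$, because the contributions of $\eta_r$ for $r\ge 1$ land in the mixed block $g_{i\bar j}$ via \eqref{metric_tensor_gauge2}. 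Once these observations are in place the proof is a two-line computation in each direction, and the substance of the equivalence is really carried by the structural results on linear-system information geometry imported from \cite{Choi:2014c}.
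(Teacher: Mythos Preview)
Your proposal is correct and follows essentially the same route as the paper's proof: both directions hinge on the identity $g_{ij}=\partial_i\eta_0\,\partial_j\eta_0$ with $\eta_0=\log h_0$, so that the Hermitian condition $g_{ij}=0$ is equivalent to $h_0$ being constant in $\boldsymbol{\xi}$, with closedness of the K\"ahler form then being an easy check. If anything, you are more explicit than the paper in the converse direction (the paper simply says ``it is obvious'' where you extract $\partial_i\eta_0=0$ from the diagonal entries $(\partial_i\eta_0)^2=0$).
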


\begin{proof}
		If $h_0$ is a constant, the metric tensor expressions, eq.(\ref{metric_tensor_gauge1}) and eq. (\ref{metric_tensor_gauge2}), are given by
		\begin{equation}
			g_{ij}=g_{\bar{i}\bar{j}}=0, g_{i\bar{j}}=\sum_{r=1}^{\infty} \partial_i \eta_r \partial_{\bar{j}} \bar{\eta}_r\nonumber
		\end{equation}
		i.e. the manifold is Hermitian. Additionally, it is easy to check that the K\"ahler form is closed.
		
		If the geometry is K\"ahler, the manifold is Hermitian where $g_{ij}=g_{\bar{i}\bar{j}}=0$ for all $i$ and $j$. From this Hermitian condition, it is obvious that $h_0$ is constant in $\boldsymbol{\xi}$.
	\end{proof}

For the K\"ahlerian linear systems, the K\"ahler potential is the square of the Hardy norm of
the log-transfer function on the unit disk $\mathbb{D}$ \cite{Choi:2014c}:  
\begin{equation}  \label{eqn_kahler_potential_formula}
\mathcal{K}=\frac{1}{2\pi i}\oint_{|z|=1}|\log{h(z;\boldsymbol{\xi})}|^2%
\frac{dz}{z}=||\log{h(z;\boldsymbol{\xi})}||^2_{H^2}
\end{equation}
and the K\"ahler potential is also related to the 0-divergence. It is
identical to the 0-divergence for the unilateral transfer
function. It is a constant in $\alpha$ of $\alpha$-divergence.

According to the literature \cite{Choi:2014c}, the benefits of the K\"ahlerian
information geometry are the followings. First of all, the calculation of
the geometric tensors and Levi-Civita connection is simplified by the K\"ahler structure and the expressions for the geometric objects are given by eq. (\ref{metric_kahler_rel}), eq. (\ref{connection_kahler_rel}), and eq. (\ref{ricci_kahler_rel}).
Additionally, the $\alpha $-generalization of the tensors are still $\alpha $%
-linear. Finally, it is easier to find superharmonic priors on the
manifold because the Laplace--Beltrami operator on the K\"ahler manifold is in the simpler form.

We give an example: one of the most interesting linear systems is the
fractionally integrated autoregressive moving average (ARFIMA) model. For
the ARFIMA$(p,d,q)$ model of $\xi =(d,\lambda _{1},\cdots ,\lambda _{p},\mu
_{1},\cdots ,\mu _{q})$, the transfer function rescaled by the gain is given
by 
\begin{equation}
h(z;\boldsymbol{\xi })=\frac{(1-\mu _{1}z^{-1})(1-\mu _{2}z^{-1})\cdots
(1-\mu _{q}z^{-1})}{(1-\lambda _{1}z^{-1})(1-\lambda _{2}z^{-1})\cdots
(1-\lambda _{p}z^{-1})}(1-z^{-1})^{d}  \notag
\end{equation}%
where $\lambda _{i}$ is a pole from the AR part, $\mu _{i}$ is a root from
the MA part, and $d$ is a differencing parameter. The poles and the roots
are expected to be on the unit disk. By Theorem \ref{thm_tf_kahler_condition}%
, it is clear that the information geometry of the ARFIMA model is K\"a%
hler. The K\"ahler potential of the ARFIMA model, also found in the
literature \cite{Choi:2014d}, is calculated from eq. (\ref%
{eqn_kahler_potential_formula}) as 
\begin{equation}
\mathcal{K}=\sum_{k=1}^{\infty }\Big|\frac{d+(\mu _{1}^{k}+\cdots +\mu
_{q}^{k})-(\lambda _{1}^{k}+\cdots +\lambda _{p}^{k})}{k}\Big|^{2}
\label{eqn_kahler_potential_ARFIMA}
\end{equation}%
and it is bounded above by $(d+p+q)^{2}\frac{\pi ^{2}}{6}$. The metric
tensor, derived from eq. (\ref{metric_kahler_rel}), is represented by 
\begin{equation*}
g_{i\bar{j}}=\left( 
\begin{array}{ccc}
\frac{\pi ^{2}}{6} & \frac{1}{\bar{\lambda}_{j}}\log {(1-\bar{\lambda}_{j})}
& -\frac{1}{\bar{\mu}_{j}}\log {(1-\bar{\mu}_{j})} \\ 
\frac{1}{\lambda _{i}}\log {(1-\lambda _{i})} & \frac{1}{1-\lambda _{i}\bar{%
\lambda}_{j}} & -\frac{1}{1-\lambda _{i}\bar{\mu}_{j}} \\ 
-\frac{1}{\mu _{i}}\log {(1-\mu _{i})} & -\frac{1}{1-\mu _{i}\bar{\lambda}%
_{j}} & \frac{1}{1-\mu _{i}\bar{\mu}_{j}}%
\end{array}%
\right) 
\end{equation*}%
where the first column and the first row are for the direction of the
fractional differencing parameter $d$. It is easy to find the metric tensor
for the pure ARMA model as a submanifold of the ARFIMA geometry.

The non-trivial connection elements are also found from eq. (\ref{connection_kahler_rel}) and it
is noteworthy that the connection components with the differencing parameter
direction at any index of the first two indices are all vanishing. The Ricci tensor components are also calculated by eq. (\ref%
{ricci_kahler_rel}) and it is also vanishing along the $d$-direction. The non-vanishing Ricci tensor components are only from the pure ARMA directions with the
correction term from the mixing between the pure ARMA piece and the
fractionally integrated part:  
\begin{equation}
R_{i\bar{j}}=R_{i\bar{j}}^{ARMA}+ R_{i\bar{j}}^{ARMA-FI}  \notag
\end{equation}
where $i$ and $j$ are not along the $d$-direction.

\section{Geometric shrinkage priors of K\"ahlerian filters}

\label{sec_maxent_geo_prior} First, we review the superharmonic priors
proposed by Komaki \cite{Komaki:2006}. The difference in risk function
between two Bayesian predictive densities from the Jeffreys prior $\pi _{J}$
and the superharmonic prior $\pi _{I}$ with respect to the density $p(y|%
\boldsymbol{\xi })$ is given by 
\begin{align}
& \mathbb{E}[D_{KL}(p(y|\boldsymbol{\xi })||p_{\pi _{J}}(y|x^{(N)}))|%
\boldsymbol{\xi }]-\mathbb{E}[D_{KL}(p(y|\boldsymbol{\xi })||p_{\pi
_{I}}(y|x^{(N)}))|\boldsymbol{\xi }]  \notag \\
& =\frac{1}{2N^{2}}g^{ij}\partial _{i}\log {\Big(\frac{\pi _{I}}{\pi _{J}}%
\Big)}\partial _{j}\log {\Big(\frac{\pi _{I}}{\pi _{J}}\Big)}-\frac{1}{N^{2}}%
\frac{\pi _{J}}{\pi _{I}}\Delta \Big(\frac{\pi _{I}}{\pi _{J}}\Big)+o(N^{-2})
\notag
\end{align}%
where $N$ is the size of samples $x$. If a positive prior function $\psi =\pi
_{I}/\pi _{J}$ is superharmonic, the risk function of the Bayesian
predictive density $p_{\pi _{I}}$ is decreased with respect to that of $p_{\pi _{J}}$%
, the predictive density from the Jeffreys prior. Comparing with $p_{\pi
_{J}}$, $p_{\pi _{I}}$ is closer to $p(y|\boldsymbol{\xi })$ in the
Kullback-Leibler divergence. Superharmonic priors for several probability
distributions and linear systems have been found \cite{Komaki:2006, Tanaka:2006,
Tanaka:2009, Choi:2014c, Choi:2014d}.

A difficulty in Komaki's idea is that it is non-trivial to test the
superharmonicity for a prior function $\psi $ of a general statistical model or a linear system with high dimensionality. Although a superharmonic prior
for the AR model in an arbitrary dimension was found by Tanaka \cite%
{Tanaka:2009}, no superharmonic priors for the ARMA models and the ARFIMA
models were found. Moreover, any systematic algorithms for finding the Komaki priors were not known.

Recently, a generic algorithm for the shrinkage priors of linear systems is
introduced when the information geometry of the model is K\"ahler \cite%
{Choi:2014d}. Superharmonic priors for more general
time series models and signal filters are efficiently constructed by the
algorithm. The following theorem is useful to find the superharmonic prior
functions.

\begin{thm}
\label{thm_algo_prior} On a K\"ahler manifold, a positive function $\psi
=\Psi (u^{\ast }-\kappa (\boldsymbol{\xi },\bar{\boldsymbol{\xi }}))$ is a
superharmonic prior function if $\kappa (\boldsymbol{\xi },\bar{\boldsymbol{%
\xi }})$ is subharmonic (or harmonic), bounded above by $u^{\ast }$, and $\Psi $ is
concave decreasing: $\Psi ^{\prime }(\tau )>0$, $\Psi ^{\prime \prime }(\tau
)\le 0$  (or $\Psi ^{\prime }(\tau )>0$, $\Psi ^{\prime \prime }(\tau
)< 0$). 
\end{thm}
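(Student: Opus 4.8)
The plan is to compute the Laplace--Beltrami operator applied to $\psi = \Psi(u^* - \kappa(\boldsymbol{\xi},\bar{\boldsymbol{\xi}}))$ directly, using the simplified form $\Delta = 2 g^{i\bar{j}} \partial_i \partial_{\bar{j}}$ available on a K\"ahler manifold, and then show that the hypotheses force $\Delta \psi \le 0$, which is exactly superharmonicity. First I would set $\tau = u^* - \kappa$ and apply the chain rule: $\partial_{\bar{j}} \psi = -\Psi'(\tau)\, \partial_{\bar{j}}\kappa$, and then $\partial_i \partial_{\bar{j}} \psi = \Psi''(\tau)\, \partial_i \kappa\, \partial_{\bar{j}}\kappa - \Psi'(\tau)\, \partial_i \partial_{\bar{j}}\kappa$. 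Contracting with $2 g^{i\bar{j}}$ gives
\begin{equation}
\Delta \psi = 2\Psi''(\tau)\, g^{i\bar{j}} \partial_i \kappa\, \partial_{\bar{j}}\kappa - \Psi'(\tau)\, \Delta \kappa. \notag
\end{equation}

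Next I would bound each of the two terms. The matrix $g^{i\bar{j}}$ is the inverse of the positive-definite Hermitian metric, hence itself positive-definite, so the quadratic form $g^{i\bar{j}} \partial_i \kappa\, \partial_{\bar{j}}\kappa = g^{i\bar{j}} \partial_i \kappa\, \overline{\partial_j \kappa}$ is $\ge 0$ (using that $\kappa$ is real, so $\partial_{\bar{j}}\kappa = \overline{\partial_j \kappa}$). Since $\Psi$ is concave, $\Psi''(\tau) \le 0$, so the first term is $\le 0$. For the second term: $\kappa$ being subharmonic means $\Delta \kappa \ge 0$, and $\Psi' > 0$, so $-\Psi'(\tau)\,\Delta\kappa \le 0$. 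Hence $\Delta \psi \le 0$, i.e. $\psi$ is superharmonic. The boundedness hypothesis $\kappa \le u^*$ guarantees $\tau \ge 0$ so that $\Psi$, $\Psi'$, $\Psi''$ are evaluated on the domain where the monotonicity/concavity assumptions are posited, and together with $\Psi' > 0$ it ensures $\psi = \Psi(\tau)$ is positive (taking $\Psi$ increasing from a nonnegative base value), so $\psi$ is a legitimate prior function; I would remark on this explicitly. In the strict variant ($\Psi'' < 0$ with $\kappa$ merely harmonic, $\Delta\kappa = 0$), the first term is strictly negative wherever $\partial \kappa \ne 0$, giving the strict superharmonicity needed to guarantee an actual risk improvement rather than merely a non-increase.

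I anticipate the only real subtlety — not so much an obstacle as a point requiring care — is the bookkeeping around reality of $\kappa$ and the sign conventions for ``subharmonic'' in the complex notation: one must confirm that $\Delta\kappa = 2g^{i\bar j}\partial_i\partial_{\bar j}\kappa \ge 0$ is the correct transcription of subharmonicity on the K\"ahler manifold (it is, since $\Delta$ here is the genuine Laplace--Beltrami operator and subharmonic means $\Delta\kappa\ge0$), and that the positive-definiteness of $g^{i\bar j}$ is inherited from that of $g_{i\bar j}$. Everything else is a short chain-rule computation; no curvature terms appear precisely because the K\"ahler form of $\Delta$ has no first-order drift term. I would close by noting that this reduces the search for Komaki-type shrinkage priors to the essentially one-dimensional problem of exhibiting a subharmonic $\kappa$ bounded above — for K\"ahlerian linear systems a natural candidate is a partial sum of the K\"ahler potential $\mathcal{K}$ or a related pluri-subharmonic quantity — which is the mechanism behind the ``systematic and generic algorithm'' advertised in the introduction.
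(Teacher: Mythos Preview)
Your computation is correct and is precisely the intended argument: the paper itself does not supply a proof here but defers to the reference \cite{Choi:2014d}, where the same chain-rule expansion of $\Delta\psi$ and the same sign analysis of the two terms are carried out. One cosmetic remark: the theorem's phrase ``concave decreasing'' is a misnomer (the stated conditions $\Psi'>0$, $\Psi''\le 0$ describe a concave \emph{increasing} function, consistent with the examples $\Psi_1,\Psi_2$ given afterwards), so you are right not to be thrown by it.
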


\begin{proof}
		The proof is given in the literature \cite{Choi:2014d}.
	\end{proof}

If we find a positive subharmonic or harmonic function, we apply Theorem \ref%
{thm_algo_prior} to obtain a superharmonic function and exploit the
superharmonic function as a shrinkage prior function for prediction as
Komaki \cite{Komaki:2006} suggested.

Fortunately, several choices for $\Psi $ and $\kappa $ are already known \cite%
{Choi:2014d}. The candidates for $\Psi $ are the followings:%
\begin{align}
\Psi _{1}(\tau )& =\tau ^{a}  \notag \\
\Psi _{2}(\tau )& =\log {(1+\tau ^{a})}  \notag
\end{align}%
where $\tau $ is positive and $0<a\leq 1$ for subharmonic $\kappa$ (or $0<a< 1$ for harmonic $\kappa$). Moreover, the ans\"atze for $%
\kappa $ are found as 
\begin{align}
\kappa _{1}& =\mathcal{K}  \notag \\
\kappa _{2}& =\sum_{r=0}^{\infty }a_{r}|h_{r}(\boldsymbol{\xi })|^{2}  \notag
\\
\kappa _{3}& =\sum_{i=1}^{n}b_{i}|\xi ^{i}|^{2}  \notag
\end{align}%
where $a_{r}$ and $b_{i}$ are positive real numbers. In particular, $\kappa
_{1}$ is the K\"ahler potential which is intrinsic on the K\"ahler
manifold. By combining $\kappa $ and $\Psi $, it is easy to construct
geometric shrinkage priors like 
\begin{align}
\psi _{1}& =(u^{\ast }-\mathcal{K})^{a}  \notag \\
\psi _{2}& =\log {(1+(u^{\ast }-\mathcal{K})^{a})}  \notag
\end{align}%
which outperform the Jeffreys prior in the viewpoint of information
theory.

\section{Conclusion}

We reviewed information geometric applications of K\"ahler manifolds
to linear systems and Bayesian inference, and exposed that the simpler Laplace--Beltrami operator, one of the advantages in the K\"ahlerian approach, is applicable to Bayesian inference:
finding superharmonic priors on the K\"ahler manifold is
straightforward, as we have shown for linear systems, in particular, the ARFIMA models.

\begin{theacknowledgments}
	We are grateful to Fr\'ed\'eric Barbaresco, Robert J. Frey, Hiroshi Matsuzoe, Michael Tiano, and Jun Zhang for useful discussions. We thank Fr\'ed\'eric Barbaresco for notifying his notable works on the K\"ahler geometry and information geometry. We are also thankful to the participants and the organizers of MaxEnt 2014 in Amboise, France.
\end{theacknowledgments}




\bibliographystyle{aipproc}
\bibliography{sample}

\IfFileExists{\jobname.bbl}{}  {\typeout{}  %
\typeout{******************************************}  \typeout{** Please run
"bibtex \jobname" to optain}  \typeout{** the bibliography and then re-run
LaTeX}  \typeout{** twice to fix the references!}  %
\typeout{******************************************}  \typeout{}  }

\end{document}